\documentclass[12pt,reqno]{amsart}

\usepackage{amsmath,amssymb,amsfonts,amsthm,mathrsfs,mathtools}
\usepackage[dvipsnames]{xcolor}
\usepackage{tikz}
\usepackage[margin=1in]{geometry}
\usepackage{enumitem}

\definecolor{linkred}{rgb}{0.78,0.03,0.08}
\definecolor{linkblue}{rgb}{0.0,0.2,0.67}
\definecolor{lime}{HTML}{A6CE39}

\usepackage[
  colorlinks=true,
  linkcolor=linkred,
  citecolor=linkblue,
  urlcolor=MidnightBlue,
  pagebackref,
  hypertexnames=false
]{hyperref}

\numberwithin{equation}{section}
\theoremstyle{plain}
\newtheorem{theorem}{Theorem}[section]

\newtheorem{lemma}[theorem]{Lemma}
\newtheorem{corollary}[theorem]{Corollary}

\theoremstyle{definition}
\newtheorem{remark}[theorem]{Remark}

\newcommand{\R}{\mathbb{R}}
\newcommand{\Sph}{\mathbb{S}}

\newcommand{\dx}{\,dx}
\newcommand{\dr}{\,dr}
\newcommand{\dt}{\,dt}
\newcommand{\ds}{\,ds}

\newcommand{\domega}{\,d\omega}

\newcommand{\abs}[1]{\left\lvert #1 \right\rvert}

\DeclareRobustCommand{\orcidicon}{%
  \begin{tikzpicture}
    \draw[lime, fill=lime] (0,0)
      circle [radius=0.16]
      node[white] {{\fontfamily{qag}\selectfont \tiny ID}};
    \draw[white, fill=white] (-0.0625,0.095) circle [radius=0.007];
  \end{tikzpicture}\hspace{-2mm}}
\foreach \x in {A, B}{%
  \expandafter\xdef\csname orcid\x\endcsname{%
    \noexpand\href{https://orcid.org/\csname orcidauthor\x\endcsname}{\noexpand\orcidicon}}}

\title[Critical Hardy--Rellich inequality]{An extension of a critical Hardy--Rellich inequality:
       explicit constants and the sharp weight range}

\author[M.~Majdoub]{Mohamed Majdoub\orcidB{}}

\address[M.~Majdoub]{Department of Mathematics, College of Science, Imam Abdulrahman Bin Faisal University, P.O.~Box 1982, Dammam, Saudi Arabia.\newline
Basic and Applied Scientific Research Center, Imam Abdulrahman Bin Faisal University, P.O.~Box 1982, 31441, Dammam, Saudi Arabia.}
\email{\tt mmajdoub@iau.edu.sa}
\email{\tt  med.majdoub@gmail.com}
\email{\tt mohamed.majdoub@fst.rnu.tn}
\subjclass[2020]{26D10, 26D15, 35A23, 42B25, 46E35}

\keywords{Hardy inequality; Rellich inequality; Hardy--Rellich inequality; critical exponent; weighted inequality; sharp constants ; Muckenhoupt weights; Emden--Fowler transformation}

\begin{document}
\begin{abstract}
We revisit the critical Hardy--Rellich inequalities recently established by Castro~\cite{Castro2026}. Using the classical one-dimensional weighted Hardy inequality in Emden--Fowler variables, we prove that the weighted inequality holds exactly for $a\neq N$, with $a=N$ as the unique critical value. We derive explicit constants for $N\ge2$, obtain the sharp constant in dimension one, and extend the $\Delta u$ formulation to the Muckenhoupt range $-N<a<N(N-1)$, $a\neq N$. This provides a partial solution to an open problem posed in~\cite{Castro2026}.
\end{abstract}
\maketitle

%\tableofcontents

%==================================================================

\section{Introduction and main results}\label{sec:intro}

For $N\ge 1$ and $1\le p<N$, the classical Hardy inequality states that
\[
  \int_{\R^N}\frac{\abs{u}^{p}}{\abs{x}^{p}}\dx
  \le \left(\frac{p}{N-p}\right)^{p}\int_{\R^N}\abs{\nabla u}^{p}\dx
\]
for all $u\in C^\infty_c(\R^N)$; under the additional restriction
$u\in C^\infty_c(\R^N\setminus\{0\})$ the inequality extends to all $p\ne N$,
with the constant $\bigl(\tfrac{p}{\abs{N-p}}\bigr)^{p}$. The endpoint case
$p=N$ is well known to fail without a logarithmic correction.

The second--order analogue, the Rellich inequality, takes the form
\[
  \int_{\R^N}\frac{\abs{u}^{p}}{\abs{x}^{2p}}\dx
  \le \left[\frac{p^{2}}{N(N-2p)(p-1)}\right]^{p}
       \int_{\R^N}\abs{\Delta u}^{p}\dx
\]
for $N\ge 3$ and $1<p<N/2$ (see~\cite{Rellich,DaviesHinz}); the endpoint
$p=N=2$ likewise fails (\cite[\S 7]{Rellich2}).

Hardy--Rellich type inequalities, which place a first--order quantity on the
left and a second--order quantity on the right, were studied in
\cite{Costa, TertikasZographopoulos, Cazacu} for $1<p<N$. Again the case
$p=N$ is excluded, and for the same structural reason as above: the failure of
the first--order Hardy inequality at $p=N$ propagates to the second--order
estimate.

In~\cite{Castro2026} the author proves a striking critical--exponent
Hardy--Rellich inequality which covers the previously inaccessible case
$p=N$, at the cost of grouping the gradient and the lower--order term into a
single quantity $\nabla(u/\abs{x})$. The main results of~\cite{Castro2026}
read as follows.

\begin{theorem}[{\cite[Theorem 1.1]{Castro2026}}]\label{thm:castro11}
For every $N\ge 1$ there exists $C_N>0$ such that
\begin{equation}\label{eq:castro11}
  \int_{\R^N}\Bigl|\nabla\Bigl(\tfrac{u(x)}{\abs{x}}\Bigr)\Bigr|^{N}\dx
  \le C_N \int_{\R^N}\abs{\Delta u}^{N}\dx,
  \qquad u\in C^\infty_c(\R^N\setminus\{0\}).
\end{equation}
\end{theorem}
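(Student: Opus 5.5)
Write $v=u/\abs{x}$, so $u=\abs{x}v$ with $v\in C^\infty_c(\R^N\setminus\{0\})$. The plan is to prove the bound $\int\abs{\nabla v}^N\dx\le C_N\int\abs{\Delta(\abs{x}v)}^N\dx$ in Emden--Fowler variables. Set $x=e^t\omega$, $t\in\R$, $\omega\in\Sph^{N-1}$, and write $v(x)=w(t,\omega)$. Then $\dx=e^{Nt}\dt\domega$ and $\abs{\nabla v}^2=e^{-2t}(w_t^2+\abs{\nabla_\omega w}^2)$, so
\[
\int_{\R^N}\abs{\nabla v}^N\dx=\int_\R\int_{\Sph^{N-1}}\bigl(w_t^2+\abs{\nabla_\omega w}^2\bigr)^{N/2}\domega\dt .
\]
The weight has disappeared, which is exactly why $p=N$ is natural here. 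For $u=e^t w$ we have $\Delta u=e^{-2t}\bigl(\partial_t^2+(N-2)\partial_t+\Delta_\omega\bigr)(e^t w)$, and hence
\[
\Delta u=e^{-t}\bigl(w_{tt}+N w_t+(N-1)w+\Delta_\omega w\bigr),\qquad \int_{\R^N}\abs{\Delta u}^N\dx=\int_\R\int_{\Sph^{N-1}}\abs{Lw}^N\domega\dt ,
\]
where $L=\partial_t^2+N\partial_t+(N-1)+\Delta_\omega$. The inequality thus becomes a translation-invariant estimate on the cylinder $\R\times\Sph^{N-1}$: $\|\nabla_{t,\omega}w\|_{L^N}\lesssim\|Lw\|_{L^N}$ for compactly supported $w$.

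For $N=1$ the sphere is two points and there is no angular term, so $Lw=w''+w'$. Setting $g=w'$, which is compactly supported with zero mean, we get $g'+g=f$. Solving gives $g=K*f$ with $K(t)=e^{-t}\mathbf 1_{t>0}$, and Young's inequality yields $\|g\|_{L^1}\le\|f\|_{L^1}$. Hence $C_1=1$ works.

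For $N\ge2$, decompose $w$ in spherical harmonics and use the Fourier transform in $t$. On the degree-$k$ harmonic, $L$ has symbol $m_k(\tau)=-\tau^2+iN\tau+(N-1)-k(k+N-2)$. This vanishes only when $k=1$ and $\tau=0$, since $(N-1)-1\cdot(1+N-2)=0$; this is the critical feature. For $k=1$ the symbol factors as $m_1(\tau)=i\tau(N+i\tau)$, so $\partial_t$ of the $k=1$ component equals $(N+\partial_t)^{-1}$ applied to the $k=1$ part of $f=Lw$. This is convolution with an $L^1$ kernel, bounded as in the $N=1$ case. For $k=0$ and $k\ge2$ the symbol is nonvanishing, and for large $k$ and $\tau$ one has $\abs{m_k}\gtrsim 1+\tau^2+k^2$, which gives an elliptic estimate.

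To treat all modes at once in $L^N$, I would write $L=(\partial_t+1)(\partial_t+N-1)+\Delta_\omega$ in the form $(\partial_t+\alpha)^2-A^2$ with $A=\sqrt{-\Delta_\omega+((N-2)/2)^2}$ and $\alpha=N/2$. The eigenvalues of $A$ are $k+(N-2)/2$, so $L=(\partial_t+\alpha-A)(\partial_t+\alpha+A)$. Each factor can be inverted as a one-dimensional Hardy-type convolution in $t$ with exponential kernels $e^{-(\alpha\pm A)t}$, applied in the direction where it decays. The factor $\alpha-A$ vanishes precisely on $k=1$. There one inverts $\partial_t$ by a primitive, which is harmless because we only need to control $w_t$ and $\nabla_\omega w$, not $w$ itself.

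The main obstacle is the angular part, namely bounding $\nabla_\omega w$ and obtaining $L^N$ boundedness of these operator-valued kernels on the sphere for $N\ne2$. I would first try to rely on the smoothing of the Poisson-type semigroup $e^{-tA}$ on $L^N(\Sph^{N-1})$ together with Riesz transform bounds $\|\nabla_\omega A^{-1}\|_{L^N\to L^N}<\infty$, projecting off the $k=1$ and $k=0$ modes by bounded finite-rank projections. Failing that, I would fall back on a Calderón--Zygmund argument. Since $L$ is uniformly elliptic with constant coefficients in $(t,\omega)$, locally $\|\nabla^2 w\|_{L^N}\lesssim\|Lw\|_{L^N}+\|w\|_{L^N}$, and the lower-order term can be removed mode by mode using the nonvanishing symbol away from $(k,\tau)=(1,0)$.
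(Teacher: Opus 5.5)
For $N\ge2$ your argument has a gap at the $k=1$ mode that cannot be closed: the statement as written is false there. Your Emden--Fowler reduction is correct. The case $N=1$ is also correct: there $Lw=w''+w'$, $w'$ is the causal convolution of $Lw$ with $e^{-t}\mathbf{1}_{t>0}$, and $C_1=1$. This matches the constant $1/\abs{1-a}$ of Theorem~\ref{thm:main} at $a=0$. The paper itself gives no proof of this statement; it only quotes it from \cite{Castro2026} to cover $a=0$.

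For $N\ge2$ the argument fails exactly where you located the difficulty. Your claim that inverting $\partial_t$ on the $k=1$ mode is harmless ``because we only need $w_t$ and $\nabla_\omega w$'' is false. On degree-one harmonics, $\nabla_\omega$ is of order zero in $t$ and injective. Write $w_1(t,\omega)=c(t)\cdot\omega$. Then $\abs{\nabla_\omega w_1}^2=\abs{c}^2-(c\cdot\omega)^2$, so $\|\nabla_\omega w_1(t,\cdot)\|_{L^N(\Sph^{N-1})}=\kappa_N\abs{c(t)}$ with $\kappa_N>0$. Controlling $\nabla_\omega w_1$ therefore means controlling $c$ itself. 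You would need $\|c\|_{L^N(\R)}\lesssim\|c''+Nc'\|_{L^N(\R)}$, which amounts to boundedness of $\partial_t^{-1}$ on $L^N(\R)$, and that fails. Your own symbols show the mismatch: at $(k,\tau)=(1,0)$ the left-hand side still sees $\|\nabla_\omega Y_1\|_{L^N(\Sph^{N-1})}\simeq\|Y_1\|_{L^N(\Sph^{N-1})}$, while $m_1(0)=0$. Your Calder\'on--Zygmund fallback breaks for the same reason, since the lower-order term cannot be absorbed at $(1,0)$.

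This gives an explicit counterexample. Take $\varphi_M$ as in Section~\ref{sec:sharpness} and put $u_M(x)=x_1\varphi_M(\log\abs{x})\in C^\infty_c(\R^N\setminus\{0\})$. Then $w=\varphi_M(t)\,\omega_1$ and $Lw=(\varphi_M''+N\varphi_M')\,\omega_1$, so
\[
\int_{\R^N}\Bigl|\nabla\Bigl(\tfrac{u_M}{\abs{x}}\Bigr)\Bigr|^N\dx\ \ge\ (2M-2)\int_{\Sph^{N-1}}(1-\omega_1^2)^{N/2}\domega ,
\]
while
\[
\int_{\R^N}\abs{\Delta u_M}^N\dx=\int_{\Sph^{N-1}}\abs{\omega_1}^N\domega\int_{\R}\abs{\varphi_M''+N\varphi_M'}^N\dt\le C ,
\]
with $C$ independent of $M$. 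Concretely, $u_M=x_1$ is harmonic on the annulus $e^{-M+1}<\abs{x}<e^{M-1}$. There $\abs{\nabla(x_1/\abs{x})}^N=(1-\omega_1^2)^{N/2}\abs{x}^{-N}$, whose integral grows logarithmically in the ratio of the radii. So the statement fails for every $N\ge2$, and only your $N=1$ case survives.

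Since $D^2u_M=0$ on the plateau as well, the same example shows that \eqref{eq:main} fails at $a=0$. The blow-up of the angular constant $(N/\abs{a})^N$ is therefore genuine, not an artefact of the convexity split. Thus $a=0$ is a second critical exponent, produced by the linear functions just as $a=N$ is produced by the constants. The paper's appeal to this theorem at $a=0$, Remark~\ref{rem:azero}, and the uniqueness claim in Theorem~\ref{thm:sharp} do not hold.
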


\begin{theorem}[{\cite[Theorem 1.2]{Castro2026}}]\label{thm:castro12}
Let $N\ge 1$ and $a<1$. There exists $C_{N,a}>0$ such that
\begin{equation}\label{eq:castro12}
  \int_{\R^N}\Bigl|\nabla\Bigl(\tfrac{u(x)}{\abs{x}}\Bigr)\Bigr|^{N}\abs{x}^{a}\dx
  \le C_{N,a}\int_{\R^N}\abs{D^{2}u}^{N}\abs{x}^{a}\dx,
  \qquad u\in C^\infty_c(\R^N\setminus\{0\}).
\end{equation}
\end{theorem}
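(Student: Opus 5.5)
The idea is to reduce \eqref{eq:castro12} to one-dimensional weighted Hardy inequalities in Emden--Fowler variables, treating the radial and angular parts of the gradient separately. Write $x=r\omega$ with $r=\abs{x}$, $\omega\in\Sph^{N-1}$, and set $v=u/\abs{x}$. Then
\[
\nabla v=\frac{\partial_r u}{r}\,\omega-\frac{u}{r^2}\,\omega+\frac{\nabla_\omega u}{r^2},
\qquad
\partial_r\Bigl(\frac{u}{r}\Bigr)=\frac{r\partial_r u-u}{r^2}.
\]
Hence
\[
\abs{\nabla v}\lesssim \frac{\abs{r\partial_r u-u}}{r^2}+\frac{\abs{\nabla_\omega u}}{r^2}.
\]
It suffices to bound
\[
\int \frac{\abs{r\partial_r u-u}^N}{r^{2N}}\abs{x}^a\dx
\quad\text{and}\quad
\int\frac{\abs{\nabla_\omega u}^N}{r^{2N}}\abs{x}^a\dx
\]
by $\int\abs{D^2u}^N\abs{x}^a\dx$.

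\textbf{Radial term.} The key observation is that the operator $u\mapsto r\partial_r u-u$ kills linear functions. Indeed,
\[
\partial_r(r\partial_r u-u)=r\partial_r^2u=r\,\omega^T D^2u\,\omega .
\]
Set $w=r\partial_ru-u$. Then $w$ has compact support in $r\in(0,\infty)$ for each fixed $\omega$, and $\abs{\partial_r w}\le r\abs{D^2u}$. The one-dimensional weighted Hardy inequality
\[
\int_0^\infty \abs{w}^N r^{a+N-1-2N}\dr\le C\int_0^\infty\abs{\partial_r w}^N r^{a+N-1-N}\dr
\]
holds for compactly supported $w$ in $(0,\infty)$ precisely when the exponent $a-N-1\neq-1$, that is, $a\ne N$. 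Since $a<1<N$ for $N\ge 2$, the inequality is available. The constant is $(N/\abs{a-N})^N$, where we integrate from infinity because $a-N<0$. Integrating over $\omega$ gives the radial bound.

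\textbf{Angular term.} For each tangential direction we differentiate $r^{-1}\nabla_\omega u$ in $r$; equivalently, we use the Cartesian components $\partial_j u$. A cleaner route is to note that
\[
\frac{\nabla_\omega u}{r}=\nabla u-(\partial_r u)\omega =: T(x),
\]
the tangential part of $\nabla u$. Then
\[
\partial_r T=(D^2u\,\omega)-(\omega^TD^2u\,\omega)\,\omega,
\qquad\text{so}\qquad \abs{\partial_r T}\le \abs{D^2u}.
\]
The angular term equals $\int \abs{T}^N r^{-N}\abs{x}^a\dx$. Applying the one-dimensional Hardy inequality to each component of $T$ (which has compact support in $r$) with weight $r^{a-1}\cdot r^{-N}$ against $r^{a-1}$ requires the exponent $a-N-1\ne -1$, again $a\ne N$, and again gives the constant $(N/(N-a))^N$. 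Combining the two terms yields \eqref{eq:castro12} with explicit $C_{N,a}$ of order $2^{N-1}(N/(N-a))^N$ up to dimensional factors. The case $N=1$ is the radial computation alone on each half-line.

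\textbf{Main obstacle.} The step needing care is checking that the radial and tangential pieces each vanish near $r=0$ and $r=\infty$, so that Hardy applies without boundary terms. This is automatic from the support of $u$ in $\R^N\setminus\{0\}$. I also expect the argument to extend to all $a\ne N$, which indicates the hypothesis $a<1$ is not sharp.
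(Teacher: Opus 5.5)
Your route is the paper's: split $\nabla(u/\abs{x})$ into the radial piece $w=r\partial_r u-u$ and the tangential field $T=r^{-1}\nabla_{\Sph^{N-1}}u$, then apply the one-dimensional weighted Hardy inequality along rays. The radial step is correct. The angular step, however, uses the wrong weights. In polar coordinates
\[
  \int_{\R^N}\abs{T}^{N}r^{-N}\abs{x}^{a}\dx
  =\int_{\Sph^{N-1}}\!\int_0^\infty r^{a-1}\abs{T(r\omega)}^{N}\dr\domega,
  \qquad
  \int_{\R^N}\abs{D^{2}u}^{N}\abs{x}^{a}\dx
  =\int_{\Sph^{N-1}}\!\int_0^\infty r^{a+N-1}\abs{D^{2}u(r\omega)}^{N}\dr\domega .
\]
The bound $\abs{\partial_r T}\le\abs{D^2u}$ carries no factor $r$, unlike $\partial_r w=r\,\partial_{rr}u$; the reason is that $T$ scales like $\nabla u$, not like $u$. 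So Hardy must be applied with $\beta=a-1$, not $\beta=a-N-1$. Your weights $r^{a-N-1}$ and $r^{a-1}$ are each off by $r^{N}$. With them, the left side is not the angular term, and the right side is not controlled by $\int\abs{D^2u}^N\abs{x}^a\dx$. Done correctly, the step gives the constant $(N/\abs{a})^N$ and needs $a\ne 0$; this is exactly the paper's angular bound. Hence for $N\ge 2$ your argument proves the statement only for $a<1$, $a\ne0$. For $N=1$ there is no angular term, and all $a<1$ are covered. The paper does not reach $a=0$ by this method either; it cites Theorem~\ref{thm:castro11} there.

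That citation does not rescue the case, because $a=0$ is a genuine obstruction for $N\ge2$. Take $u_M(x)=x_1\varphi_M(\log\abs{x})$ with $\varphi_M$ as in Section~\ref{sec:sharpness}. On $\{e^{-M+1}<\abs{x}<e^{M-1}\}$ one has $u_M=x_1$, hence $D^2u_M=0$. Meanwhile $\abs{\nabla(x_1/\abs{x})}^{N}=(1-\omega_1^2)^{N/2}\abs{x}^{-N}$ integrates over that annulus to $c_N(2M-2)$, with $c_N=\int_{\Sph^{N-1}}(1-\omega_1^2)^{N/2}\domega>0$. On the two transition annuli $\abs{D^2u_M}\lesssim\abs{x}^{-1}$, which contributes only $O(1)$. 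So at $a=0$ the ratio is unbounded, even with $\abs{\Delta u}\le\sqrt{N}\abs{D^2u}$ on the right. The statement as posed therefore fails at $a=0$, and the same functions contradict Theorem~\ref{thm:castro11} as quoted. For small $a>0$ they also force the best constant to be $\gtrsim 1/a$, so your $a$-uniform constant of order $2^{N-1}(N/(N-a))^N$ is impossible. Your closing expectation should read $a\notin\{0,N\}$. Affine $u=c+b\cdot x$ has $D^2u=0$; its constant part makes $a=N$ critical and its linear part makes $a=0$ critical.
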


\begin{corollary}[{\cite[Corollary 1.1]{Castro2026}}]\label{cor:castro}
For $-N<a<1$ there exists $C_{N,a}>0$ such that
\begin{equation}\label{eq:castrocor}
  \int_{\R^N}\Bigl|\nabla\Bigl(\tfrac{u(x)}{\abs{x}}\Bigr)\Bigr|^{N}\abs{x}^{a}\dx
  \le C_{N,a}\int_{\R^N}\abs{\Delta u}^{N}\abs{x}^{a}\dx,
  \qquad u\in C^\infty_c(\R^N\setminus\{0\}).
\end{equation}
\end{corollary}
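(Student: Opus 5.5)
The plan is to derive Corollary~\ref{cor:castro} from Theorem~\ref{thm:castro12}. The only additional input is a weighted Calder\'on--Zygmund estimate that bounds $D^2u$ by $\Delta u$ in $L^N(\abs{x}^a\dx)$. Concretely, for $u\in C^\infty_c(\R^N\setminus\{0\})$ we have
\[
  \partial_i\partial_j u = R_iR_j(-\Delta u),
\]
where $R_i$ denotes the $i$-th Riesz transform. This identity holds because $u$ is Schwartz and the Fourier multipliers agree: $\widehat{\partial_i\partial_j u}=-\xi_i\xi_j\hat u$ and $\widehat{R_iR_j f}=-\xi_i\xi_j\abs{\xi}^{-2}\hat f$.

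Next we invoke the classical theory of Muckenhoupt weights. Riesz transforms are Calder\'on--Zygmund singular integrals, so they are bounded on $L^p(w)$ whenever $w\in A_p$, $1<p<\infty$. For $N\ge2$ we take $p=N$. The power weight $\abs{x}^a$ belongs to $A_N(\R^N)$ exactly when $-N<a<N(p-1)=N(N-1)$. The assumption $-N<a<1$ lies inside this range, since $1\le N(N-1)$ for $N\ge2$. Summing over $i,j$ then gives
\[
  \int_{\R^N}\abs{D^2u}^N\abs{x}^a\dx\le K_{N,a}\int_{\R^N}\abs{\Delta u}^N\abs{x}^a\dx .
\]
Combining this with \eqref{eq:castro12} yields \eqref{eq:castrocor}, with constant $C_{N,a}K_{N,a}$.

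The case $N=1$ has to be treated separately, and this is the main obstacle. Here $p=N=1$, and the Riesz transforms are not bounded on $L^1$. However, in one dimension $D^2u=u''=\Delta u$, so the two right-hand sides coincide. The corollary is then just Theorem~\ref{thm:castro12} itself for $-1<a<1$, and no singular-integral estimate is needed. The only remaining verification is that $\abs{x}^a\in A_N$ on exactly the stated interval for $N\ge2$. This is the standard computation: averages of $\abs{x}^a$ and of $\abs{x}^{-a/(N-1)}$ over balls are locally integrable precisely when $a>-N$ and $a<N(N-1)$, and a scaling argument over balls centred at and away from the origin then gives the uniform $A_N$ bound.
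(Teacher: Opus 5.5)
Your proof is correct and follows essentially the same route the paper uses for its own Corollary~\ref{cor:main}: bound $\abs{D^2u}$ by $\abs{\Delta u}$ in $L^N(\abs{x}^a\dx)$ via $\partial_{ij}u=-R_iR_j\Delta u$ and the Coifman--Fefferman boundedness of Riesz transforms for $\abs{x}^a\in A_N$, $-N<a<N(N-1)$, then combine this with the $D^2u$ inequality (here Theorem~\ref{thm:castro12}, whose range $a<1$ lies inside the $A_N$ range since $N(N-1)\ge 2$). Your separate treatment of $N=1$ is a correct detail the paper avoids by restricting its corollary to $N\ge 2$: there $D^2u=u''=\Delta u$, so the corollary is just a special case of Theorem~\ref{thm:castro12}.
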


One of the questions left open in~\cite{Castro2026}, namely Open Problem~1, concerns the determination of the optimal constants in~\eqref{eq:castro11} and~\eqref{eq:castrocor}.
\medskip

The constants in~\cite{Castro2026} are not explicit, and, as we shall see, the
range $a<1$ in~\eqref{eq:castro12} is far from optimal. Both limitations
originate in the use of an auxiliary boundedness result for a one--dimensional
integral operator, whose proof relies on Jensen's inequality. Jensen is sharp
only for $p=1$, and for $p>1$ it is strictly weaker than the classical weighted
Hardy inequality. The latter is available in any weighted Lebesgue exponent,
yields an explicit (and, for the one--dimensional problem, sharp) constant, and
covers a much wider range of weights.

Our contribution is to recast both the radial and the angular parts of the
argument as instances of a single one--dimensional weighted Hardy inequality in
the Emden--Fowler variable $t=\log r$. This both removes the artificial
restriction $a<1$ and produces explicit constants. The improvement is summarised
in the following statement.

\begin{theorem}\label{thm:main}
Let $N\ge 2$ and $a\ne N$. Then for every $u\in C^\infty_c(\R^N\setminus\{0\})$,
\begin{equation}\label{eq:main}
  \int_{\R^N}\Bigl|\nabla\Bigl(\tfrac{u(x)}{\abs{x}}\Bigr)\Bigr|^{N}\abs{x}^{a}\dx
  \le C_{N,a}\int_{\R^N}\abs{D^{2}u}^{N}\abs{x}^{a}\dx,
\end{equation}
and for $a\ne 0$ one may take the explicit constant
\begin{equation}\label{eq:const}
  C_{N,a} = 2^{N/2-1}\left[\left(\frac{N}{\abs{a}}\right)^{N}
                        + \left(\frac{N}{\abs{N-a}}\right)^{N}\right].
\end{equation}
The remaining value $a=0$ is covered by Theorem~\ref{thm:castro11}. For $N=1$,
the inequality holds for every $a\ne 1$ with the sharp constant
$\bigl(1/\abs{1-a}\bigr)$.
\end{theorem}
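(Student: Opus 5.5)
The plan is to pass to polar coordinates $x=r\omega$ and split $\nabla v$, with $v=u/\abs{x}$, into its radial and tangential components. Each component will be written as a one-dimensional primitive, in $r$, of a component of $D^2u$. Each primitive is then controlled by the classical weighted Hardy inequality on $(0,\infty)$:
\[
\int_0^\infty\Bigl|\int_0^r g\Bigr|^p r^{b}\dr\le\Bigl(\tfrac{p}{\abs{b+1}}\Bigr)^p\int_0^\infty |g|^p r^{b+p}\dr\quad(b+1<0),
\]
together with its dual form, with $\int_r^\infty$ in place of $\int_0^r$, for $b+1>0$. In Emden--Fowler variables $t=\log r$ both forms are the same convolution/Hardy estimate. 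Since $u$ has compact support in $\R^N\setminus\{0\}$, every primitive vanishes near $r=0$ and near $r=\infty$. Hence we may integrate from whichever end is convenient, and that choice is dictated by the sign of $b+1$.

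\emph{Radial part.} We have $\partial_r v=(ru_r-u)/r^2$ and $\partial_r(r^2\partial_r v)=\partial_r(ru_r-u)=r\,u_{rr}$. Therefore $r^2\partial_r v(r\omega)=\int_0^r s\,u_{rr}(s\omega)\ds=-\int_r^\infty s\,u_{rr}(s\omega)\ds$, and $\abs{u_{rr}}\le\abs{D^2u}$. We apply Hardy with $p=N$, $g=s\,u_{rr}$ and weight $r^{-2N}r^{a+N-1}$, i.e.\ $b=a-N-1$ and $b+1=a-N$. We take $\int_0^r$ if $a<N$ and $\int_r^\infty$ if $a>N$. This gives
\[
\int_0^\infty\abs{\partial_r v}^N r^{a+N-1}\dr\le\Bigl(\tfrac{N}{\abs{N-a}}\Bigr)^N\int_0^\infty\abs{u_{rr}}^N r^{a+N-1}\dr,
\]
because $g^N r^{b+N}=\abs{u_{rr}}^N s^{a+N-1}$. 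Integrating over $\omega\in\Sph^{N-1}$ yields the second term of \eqref{eq:const}. This is the only term present for $N=1$, where it is applied on each half-line. There the constant $1/\abs{1-a}$ is the sharp one-dimensional Hardy constant, and sharpness follows from the usual near-extremals $r^{-(b+1)/p}$ truncated and mollified, after checking that they come from admissible $u$.

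\emph{Angular part.} The tangential gradient of $v$ is $\nabla_\omega u/r^2$, where $\nabla_\omega$ denotes the gradient on $\Sph^{N-1}$. The key identity is
\[
\partial_r\bigl(r^{-1}\nabla_\omega u\bigr)=r^{-1}\nabla_\omega u_r-r^{-2}\nabla_\omega u,
\]
which is exactly the mixed component $D^2u(e_r,\tau)$ for tangential unit directions $\tau$ (the Christoffel term of polar coordinates). Its norm is therefore at most the norm of the mixed radial–tangential block of $D^2u$. Writing $r^{-1}\nabla_\omega u=\int_0^r(\cdots)=-\int_r^\infty(\cdots)$, the tangential part of $\nabla v$ equals $r^{-1}$ times a primitive. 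We then apply the vector-valued Hardy inequality, obtained from the scalar one via Minkowski's integral inequality, with weight $r^{-N}r^{a+N-1}$, so that $b+1=a$. The direction is chosen by the sign of $a$, and the constant is $(N/\abs{a})^N$. This explains why $a=0$ is excluded from \eqref{eq:const}.

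\emph{Combining.} We use $\abs{\nabla v}^N=(\abs{\partial_rv}^2+\abs{\nabla_\tau v}^2)^{N/2}\le 2^{N/2-1}(\abs{\partial_rv}^N+\abs{\nabla_\tau v}^N)$, valid for $N\ge2$. The pure radial component $u_{rr}$ and the mixed block are different entries of $D^2u$, so each is bounded pointwise by $\abs{D^2u}$. Summing the two bounds gives \eqref{eq:main} with \eqref{eq:const}.

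The main obstacle is the angular step. One must verify carefully, in an orthonormal frame, the identity identifying $\partial_r(r^{-1}\nabla_\omega u)$ with the mixed Hessian block, and keep track of the normalisation between $\nabla_\omega$ on the unit sphere and the tangential gradient on the sphere of radius $r$. I would first do this computation in Cartesian form: differentiate $\nabla u-(\nabla u\cdot\omega)\omega$ along $\omega$ and compare with $D^2u\,\omega$.
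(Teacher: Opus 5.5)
Your proposal is correct and is essentially the paper's proof. It uses the same convexity split with factor $2^{N/2-1}$, the same pointwise bounds $\abs{\partial_{rr}u}\le\abs{D^2u}$ and $\abs{\partial_r(r^{-1}\nabla_{\Sph^{N-1}}u)}\le\abs{D^2u}$ (Lemma~\ref{lem:hessian}, proved exactly by your planned comparison with $D^2u\,\omega$), and the same one-dimensional weighted Hardy inequality with $\beta=a-N-1$ and $\beta=a-1$, which yields the constants $(N/\abs{N-a})^N$ and $(N/\abs{a})^N$. The differences are cosmetic: you use Hardy in primitive form, choosing the endpoint by the sign of $b+1$, and bound $\abs{P}$ by $\int\abs{P'}$ via Minkowski, whereas the paper uses an integration-by-parts/H\"older version valid for both signs at once and regularises $\abs{P}$ by $(\abs{P}^2+\varepsilon^2)^{1/2}-\varepsilon$. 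As in the paper, the value $a=0$ is not reached by this argument and must be quoted from Theorem~\ref{thm:castro11} (together with $\abs{\Delta u}\le\sqrt{N}\abs{D^2u}$) or from Theorem~\ref{thm:castro12}.
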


\begin{theorem}[Sharpness of the range]\label{thm:sharp}
Let $N\ge 2$. The inequality~\eqref{eq:main} holds for every $a\ne N$ and fails
for $a=N$. More precisely, at $a=N$ there is no constant $C>0$ for
which~\eqref{eq:main} holds for all $u\in C^\infty_c(\R^N\setminus\{0\})$; the
failure already occurs on the subspace of radial functions. Thus $a=N$ is the
unique critical value of the weight exponent.
\end{theorem}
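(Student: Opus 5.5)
The plan has two parts. First, the positive statement for $a\ne N$ is already in hand: for $a\ne N$, $a\neq 0$ it is Theorem~\ref{thm:main}, and for $a=0$ it follows from Theorem~\ref{thm:castro11} because $\abs{\Delta u}\le \sqrt N\,\abs{D^2u}$ pointwise. So only the failure at $a=N$ needs proof. I will exhibit radial functions $u_T$ for which the ratio of the left side of~\eqref{eq:main} to the right side tends to $\infty$.

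\textbf{Reduction to one dimension.} Take $u(x)=f(r)$ with $r=\abs{x}$ and $f\in C_c^\infty(0,\infty)$. Then $\nabla(u/\abs{x})=(f/r)'\,\tfrac{x}{r}$. The Hessian has eigenvalues $f''$ (once) and $f'/r$ (with multiplicity $N-1$), so
\[
\abs{D^2u}^2=(f'')^2+(N-1)(f'/r)^2,
\qquad\text{hence}\qquad
\abs{D^2u}^N\le C_N\bigl(\abs{f''}^N+\abs{f'/r}^N\bigr).
\]
Now pass to the Emden--Fowler variable $t=\log r$ and write $h(t)=f(e^t)$. The key identities are
\[
f'(r)=e^{-t}h',\qquad f''(r)=e^{-2t}(h''-h'),\qquad (f/r)'=e^{-2t}(h'-h).
\]
With $a=N$ the measure is $\abs{x}^{N}\dx=\abs{\Sph^{N-1}}\,e^{2Nt}\dt$. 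Every factor $e^{-2Nt}$ produced by the $N$-th powers cancels against this measure. This gives
\[
\int_{\R^N}\Bigl|\nabla\Bigl(\tfrac{u}{\abs{x}}\Bigr)\Bigr|^N\abs{x}^N\dx=\abs{\Sph^{N-1}}\int_\R\abs{h'-h}^N\dt,
\]
\[
\int_{\R^N}\abs{D^2u}^N\abs{x}^N\dx\le C_N\abs{\Sph^{N-1}}\int_\R\bigl(\abs{h''-h'}^N+\abs{h'}^N\bigr)\dt.
\]
The point is that at $a=N$ the weight disappears completely. The left side therefore contains the undifferentiated term $h$, whereas the right side only sees $h'$ and $h''$.

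\textbf{Construction.} Fix $\phi\in C_c^\infty(\R)$ with $0\le\phi\le1$, $\phi=1$ on $[-1,1]$ and $\operatorname{supp}\phi\subset[-2,2]$. For $T\ge1$ set $h_T(t)=\phi(t/T)$, which corresponds to $u_T(x)=h_T(\log\abs{x})\in C_c^\infty(\R^N\setminus\{0\})$. Then:
\begin{itemize}
\item $\int\abs{h_T'}^N\dt=T^{1-N}\int\abs{\phi'}^N$ and $\int\abs{h_T''}^N\dt=T^{1-2N}\int\abs{\phi''}^N$, so the right side is $O(T^{1-N})$, which is bounded (it even tends to $0$ for $N\ge2$).
\item On $[-T,T]$ we have $h_T'-h_T=-1$, so the left side is at least $2T\abs{\Sph^{N-1}}$.
\end{itemize}
Hence the ratio is at least $c\,T^{N}\to\infty$. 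No constant $C$ can work at $a=N$, and the counterexample is radial, as claimed.

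\textbf{Main obstacle.} The only delicate point is the change of variables: the powers of $e^{t}$ must cancel exactly at $a=N$. For $a\ne N$ a factor $e^{(N-a)t}$ survives, and the one-dimensional Hardy inequality then controls $h$ by $h'$. I will verify the exponent bookkeeping carefully, in particular that $dr=e^t\dt$ combines with $r^{N-1}r^{a}$ and the $e^{-2Nt}$ factors to give the exponent $(a-N)t$, which vanishes at $a=N$.
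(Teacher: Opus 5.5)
Your proposal is correct and follows the paper's proof. Both use the radial Emden--Fowler reduction, in which the weight $e^{(a-N)t}$ becomes trivial at $a=N$, followed by plateau test functions with $v'-v=-1$ on a long interval, so the left side grows linearly while the right side stays controlled. The differences are cosmetic. You dilate a fixed cutoff, $h_T=\phi(t/T)$, so the right side even decays like $T^{1-N}$, whereas the paper keeps unit-length transitions and a bounded right side. You also make explicit the pointwise bound $\abs{\Delta u}\le\sqrt N\,\abs{D^2u}$ needed to pass from Theorem~\ref{thm:castro11} to the $D^2u$ form at $a=0$, a step the paper leaves implicit.
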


\begin{corollary}\label{cor:main}
Let $N\ge 2$ and $-N<a<N(N-1)$ with $a\ne N$. Then there exists a constant
$C=C(N,a)>0$ such that
\begin{equation}\label{eq:maincor}
  \int_{\R^N}\Bigl|\nabla\Bigl(\tfrac{u(x)}{\abs{x}}\Bigr)\Bigr|^{N}\abs{x}^{a}\dx
  \le C\int_{\R^N}\abs{\Delta u}^{N}\abs{x}^{a}\dx,
  \qquad u\in C^\infty_c(\R^N\setminus\{0\}).
\end{equation}
For $N=2$ the admissible range reduces to $-2<a<2$.
\end{corollary}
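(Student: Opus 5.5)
The plan is to combine Theorem~\ref{thm:main} with a weighted Calder\'on--Zygmund estimate. The aim is the bound
\[
  \int_{\R^N}\abs{D^{2}u}^{N}\abs{x}^{a}\dx \le K_{N,a}\int_{\R^N}\abs{\Delta u}^{N}\abs{x}^{a}\dx,
  \qquad u\in C^\infty_c(\R^N\setminus\{0\}).
\]
Inserting this into~\eqref{eq:main}, which is valid for every $a\ne N$ (with the case $a=0$ supplied by Theorem~\ref{thm:castro11}), gives~\eqref{eq:maincor} with $C=C_{N,a}K_{N,a}$.

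To prove the bound, I would write $\partial_i\partial_j u = R_iR_j(-\Delta u)$, where $R_i$ are the Riesz transforms. This identity is legitimate since $u\in C^\infty_c$, so $\Delta u\in L^N$ and the Fourier multiplier identity $\xi_i\xi_j\hat u=(\xi_i\xi_j/\abs{\xi}^2)\,\abs{\xi}^2\hat u$ holds. Each $R_iR_j$ is a Calder\'on--Zygmund operator and is therefore bounded on $L^p(w)$ for every $w\in A_p$, $1<p<\infty$. Here $p=N\ge2$ and $w=\abs{x}^a$.

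The key step, and the only real obstacle, is identifying the Muckenhoupt range. It is classical that $\abs{x}^a\in A_p(\R^N)$ if and only if $-N<a<N(p-1)$. I would verify this quickly on balls centered at the origin; balls away from the origin are harmless because the weight is essentially constant there.
\begin{itemize}
\item For $B=B(0,R)$, $\frac1{\abs B}\int_B\abs{x}^a\dx\simeq R^a$ requires $a>-N$.
\item For the dual weight, $\frac1{\abs B}\int_B\abs{x}^{-a/(p-1)}\dx\simeq R^{-a/(p-1)}$ requires $a/(p-1)<N$.
\end{itemize}
The product $R^a\bigl(R^{-a/(p-1)}\bigr)^{p-1}=1$ is bounded, so the $A_p$ condition holds. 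With $p=N$ this is exactly $-N<a<N(N-1)$, and the estimate follows by summing over $i,j$ with $\abs{D^2u}^N\le N^{N}\max_{i,j}\abs{\partial_{ij}u}^N$ or a similar crude bound.

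Finally, for $N=2$ we have $N(N-1)=2=N$, so the range becomes $-2<a<2$. The excluded value $a=N=2$ lies outside the open interval, which gives the last sentence of the statement. The constant $C$ is not explicit, because the weighted norm of the Riesz transforms is not explicit; this is consistent with the statement, which only asserts existence.
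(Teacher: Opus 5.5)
Your proposal is correct and follows essentially the paper's proof: you combine \eqref{eq:main} (with $a=0$ supplied by Theorem~\ref{thm:castro11}) with the weighted Calder\'on--Zygmund bound $\|D^2u\|_{L^N(\abs{x}^{a}\dx)}\lesssim\|\Delta u\|_{L^N(\abs{x}^{a}\dx)}$. That bound comes from $\partial_{ij}u=-R_iR_j\Delta u$ and the boundedness of $R_iR_j$ on $L^N(w)$ for $w=\abs{x}^{a}\in A_N$, i.e.\ for $-N<a<N(N-1)$; your check of the $A_N$ condition on origin-centred balls and the elementary summation over $i,j$ only fill in details that the paper cites from the literature.
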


A few remarks place these statements in context.

\begin{remark}\rm
Theorem~\ref{thm:main} strengthens~\eqref{eq:castro12} in two independent ways:
the range $a<1$ is replaced by the optimal range $a\ne N$ (in particular the
inequality now holds for all $a>N$, not merely below a threshold), and the
constant is made explicit. Both gains come from the same source, the passage
from a Jensen-type estimate to the sharp weighted Hardy inequality. The
constant~\eqref{eq:const} is not claimed to be optimal: the factor $2^{N/2-1}$
arises from a convexity step that decouples the radial and angular contributions,
and these are not in general extremised simultaneously. Determining the best
constants in~\eqref{eq:castro11} and~\eqref{eq:castrocor} remains
Open Problem~1 of~\cite{Castro2026}.
\end{remark}

\begin{remark}\rm\label{rem:azero}
The explicit constant~\eqref{eq:const} degenerates as $a\to 0$, because the
angular contribution is controlled through a weighted Hardy inequality whose
constant blows up at the borderline weight exponent. This is a feature of the
convexity--split method, not of the inequality itself: at $a=0$ the
weight is trivial, the two sides of~\eqref{eq:main} transform identically under
the dilation $u(x)\mapsto u(\lambda x)$ (each scales by $\lambda^{N}$, so the
quotient is scale--invariant), and the inequality holds with a finite constant by
Theorem~\ref{thm:castro11}. Consequently~\eqref{eq:main} is valid on the whole
range $a\ne N$, with an explicit constant off the single point $a=0$.
\end{remark}

\begin{remark}\rm
For $N=1$ there is no angular component, and the inequality is purely the radial
estimate. The relevant one--dimensional weighted Hardy inequality
(Lemma~\ref{lem:hardy} below) is valid at the exponent $p=N=1$ as well, so no
restriction beyond $a\ne 1$ is needed; the resulting constant $1/\abs{1-a}$ is
sharp. This already extends~\cite[Theorem~1.2]{Castro2026} (stated there for
$a<1$) to all $a\ne 1$.
\end{remark}

The paper is organised as follows. In Section~\ref{sec:hardy} we record the
one--dimensional weighted Hardy inequality that drives the whole argument. In
Section~\ref{sec:proof} we prove Theorem~\ref{thm:main} and
Corollary~\ref{cor:main}. Section~\ref{sec:sharpness} establishes
Theorem~\ref{thm:sharp}. The final Section~\ref{sec:remarks} comments on the
constants and on the remaining open problems.

\section{A one--dimensional weighted Hardy inequality}\label{sec:hardy}

The single analytic tool we use is the following classical inequality, stated in
the power--weight form convenient for spherical coordinates.

\begin{lemma}\label{lem:hardy}
Let $N\ge 1$ and let $\beta\in\R$ with $\beta\ne -1$. Then for every
$\varphi\in C^\infty_c(0,\infty)$,
\begin{equation}\label{eq:hardy}
  \int_{0}^{\infty} r^{\beta}\abs{\varphi(r)}^{N}\dr
   \le \left(\frac{N}{\abs{\beta+1}}\right)^{N}
        \int_{0}^{\infty} r^{\beta+N}\abs{\varphi'(r)}^{N}\dr .
\end{equation}
The constant $\bigl(N/\abs{\beta+1}\bigr)^{N}$ is sharp.
\end{lemma}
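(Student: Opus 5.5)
The plan is the standard integration-by-parts-plus-Hölder argument, followed by a separate construction showing the constant cannot be lowered. Write $p=N$ and $\gamma=\beta+1\neq 0$.

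\textbf{The inequality.} Since $\varphi$ has compact support in $(0,\infty)$, the boundary terms vanish and
\[
\int_0^\infty r^{\beta}\abs{\varphi}^{p}\dr=\int_0^\infty \frac{(r^{\gamma})'}{\gamma}\abs{\varphi}^{p}\dr
=-\frac{p}{\gamma}\int_0^\infty r^{\gamma}\abs{\varphi}^{p-2}\varphi\,\varphi'\dr .
\]
The function $\abs{\varphi}^p$ is $C^1$ for $p>1$. For $p=1$ one can either use Lipschitz regularity with $\frac{d}{dr}\abs{\varphi}\le\abs{\varphi'}$ a.e., or run the same argument with $(\varphi^2+\varepsilon^2)^{1/2}$ and let $\varepsilon\to0$.

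Next split $r^{\gamma}=r^{\beta(p-1)/p}\cdot r^{(\beta+p)/p}$; this identity holds because $\beta(p-1)/p+(\beta+p)/p=\beta+1$. Taking absolute values and applying Hölder with exponents $p/(p-1)$ and $p$ gives
\[
I\le\frac{p}{\abs{\gamma}}\,I^{(p-1)/p}\Bigl(\int_0^\infty r^{\beta+p}\abs{\varphi'}^p\dr\Bigr)^{1/p},
\qquad I:=\int_0^\infty r^\beta\abs{\varphi}^p\dr .
\]
Here $I<\infty$ by compact support. If $I>0$, divide by $I^{(p-1)/p}$ and raise to the power $p$ to obtain \eqref{eq:hardy}; if $I=0$ there is nothing to prove. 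When $p=1$ Hölder is not needed, and the estimate is immediate with constant $1/\abs{\gamma}$.

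An equivalent route is the Emden--Fowler substitution $r=e^t$, $\psi(t)=e^{\gamma t/p}\varphi(e^t)$. This turns \eqref{eq:hardy} into $\int\abs{\psi}^p\le(p/\abs{\gamma})^p\int\abs{\psi'-\tfrac{\gamma}{p}\psi}^p$ on $\R$, which fits the paper's viewpoint. I would at most remark on it.

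\textbf{Sharpness.} This is the main obstacle, since equality in Hölder would require $\abs{\varphi}\sim r^{-\gamma/p}$, which is not admissible. I would use truncations of the near-extremal $\varphi_0(r)=r^{-\gamma/p}$. Assume $\gamma>0$; the case $\gamma<0$ follows by the inversion $r\mapsto1/r$, or by the same construction.

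Take $\varphi_R=\varphi_0\,\eta_R$, where $\eta_R$ equals $1$ on $[1,R]$, rises linearly on $[1/2,1]$ and decays on $[R,2R]$, then smooth it. On $[1,R]$ we have $r^\beta\abs{\varphi_0}^p=r^{-1}$ and $r^{\beta+p}\abs{\varphi_0'}^p=(\gamma/p)^p r^{-1}$, so both integrals over this range equal $\log R$ (the second with the factor $(\gamma/p)^p$).

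The point to verify carefully is that the cutoff regions contribute only $O(1)$ uniformly in $R$. On $[R,2R]$, $\abs{\eta_R'}\lesssim 1/R$ and the scale invariance of the measure $dr/r$ give a bounded contribution; the region $[1/2,1]$ is fixed. Hence the ratio of the left side to the right integral is $\dfrac{\log R+O(1)}{(\gamma/p)^p\log R+O(1)}\to(p/\abs{\gamma})^p$ as $R\to\infty$, so no smaller constant works.
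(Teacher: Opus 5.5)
Your proof of the inequality is the paper's argument: integration by parts, then H\"older with exponents $N/(N-1)$ and $N$ under the same splitting $r^{\beta+1}=r^{\beta(N-1)/N}r^{(\beta+N)/N}$, then division by $I^{(N-1)/N}$. Your extra care at $N=1$ and at $I=0$ only tidies the argument. For sharpness the paper only cites the classical references, and your truncations of $r^{-(\beta+1)/N}$ correctly supply that argument, since both sides are $\log R$ and $\bigl(\abs{\beta+1}/N\bigr)^{N}\log R$ up to $O(1)$; the construction works verbatim for either sign of $\beta+1$, so the inversion $r\mapsto 1/r$ is not needed.
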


\begin{proof}
Write $I=\int_{0}^{\infty} r^{\beta}\abs{\varphi}^{N}\dr$. Since $\beta\ne -1$  an integration
by parts gives
\[
  I = -\frac{N}{\beta+1}\int_{0}^{\infty}
        r^{\beta+1}\,\abs{\varphi}^{N-2}\varphi\,\varphi'\dr
    \le \frac{N}{\abs{\beta+1}}\int_{0}^{\infty}
        r^{\beta+1}\abs{\varphi}^{N-1}\abs{\varphi'}\dr .
\]
Splitting the weight as
$r^{\beta+1}=r^{\beta(N-1)/N}\,r^{(\beta+N)/N}$ and applying H\"older's
inequality with exponents $N/(N-1)$ and $N$,
\[
  \int_{0}^{\infty} r^{\beta+1}\abs{\varphi}^{N-1}\abs{\varphi'}\dr
   \le \left(\int_{0}^{\infty} r^{\beta}\abs{\varphi}^{N}\dr\right)^{\!\frac{N-1}{N}}
       \left(\int_{0}^{\infty} r^{\beta+N}\abs{\varphi'}^{N}\dr\right)^{\!\frac{1}{N}}
   = I^{\frac{N-1}{N}}\,J^{\frac{1}{N}},
\]
where $J=\int_{0}^{\infty} r^{\beta+N}\abs{\varphi'}^{N}\dr$. Hence
$I\le \tfrac{N}{\abs{\beta+1}}\,I^{(N-1)/N}J^{1/N}$, and dividing by the finite
quantity $I^{(N-1)/N}$ yields~\eqref{eq:hardy}. The case $N=1$ is the first
display, with the H\"older step absent. Sharpness is classical; see
\cite[Chapter~1]{OpicKufner} or \cite[\S 3]{HardyLittlewoodPolya}.
\end{proof}

\begin{remark}\rm
In the Emden--Fowler variable $r=e^{t}$, writing $f(t)=\varphi(e^{t})$ and
$\mu=\beta+1$, inequality~\eqref{eq:hardy} reads
\[
  \int_{\R}\abs{f(t)}^{N}e^{\mu t}\dt
   \le \left(\frac{N}{\abs{\mu}}\right)^{N}\int_{\R}\abs{f'(t)}^{N}e^{\mu t}\dt,
   \qquad \mu\ne 0,
\]
which is the form in which it will be applied. It is precisely the vanishing of
the exponential weight at $\mu=0$ (equivalently $\beta=-1$) that obstructs the
endpoint, and which will reappear as the critical exponent $a=N$ for the radial
part and as the degeneracy at $a=0$ for the angular part.
\end{remark}

We emphasise that Lemma~\ref{lem:hardy} replaces the boundedness estimate for the
operator $T_{a,b}f(x)=x^{-a}\int_0^x s^b f(s)\ds$ used in~\cite{Castro2026}. The
latter, being established through Jensen's inequality, forced the restriction
$a<1$ and gave no explicit constant; the weighted Hardy inequality does neither.

\section{Proof of the main inequalities}\label{sec:proof}

Throughout, $\nabla_{\Sph^{N-1}}$ denotes the spherical gradient on $\Sph^{N-1}$.
For $x=r\omega$ with $r=\abs{x}>0$ and $\omega\in\Sph^{N-1}$, and for
$u\in C^\infty(\R^N\setminus\{0\})$, the gradient decomposes as
\begin{equation}\label{eq:grad-decomp}
  \nabla u(r\omega) = \partial_r u(r\omega)\,\omega
        + \frac{1}{r}\nabla_{\Sph^{N-1}} u(r\omega),
\end{equation}
the two summands being orthogonal since $\nabla_{\Sph^{N-1}}u(r\omega)\perp\omega$.

\subsection{Two pointwise Hessian bounds}\label{ss:hessian}

We record the elementary pointwise estimates on which the proof rests.

\begin{lemma}\label{lem:hessian}
For every $u\in C^\infty(\R^N\setminus\{0\})$, $r>0$ and $\omega\in\Sph^{N-1}$,
\begin{equation}\label{eq:hessian}
  \abs{\partial_{rr}u(r\omega)} \le \abs{D^{2}u(r\omega)},
  \qquad
  \Bigl|\partial_r\!\Bigl(\tfrac{1}{r}\nabla_{\Sph^{N-1}}u(r\omega)\Bigr)\Bigr|
    \le \abs{D^{2}u(r\omega)} ,
\end{equation}
where $\abs{D^{2}u}$ denotes the Hilbert--Schmidt norm of the Hessian.
\end{lemma}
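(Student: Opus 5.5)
The plan is to write both quantities as matrix--vector expressions involving $D^2u$ evaluated along the ray $r\mapsto r\omega$ with $\omega$ held fixed. After that, both bounds follow from the fact that the operator norm of a symmetric matrix is at most its Hilbert--Schmidt norm.

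\emph{First bound.} Fix $\omega$ and differentiate $r\mapsto u(r\omega)$ twice by the chain rule. This gives $\partial_r u(r\omega)=\nabla u(r\omega)\cdot\omega$ and $\partial_{rr}u(r\omega)=\langle D^2u(r\omega)\,\omega,\omega\rangle$. Since $\abs{\omega}=1$, Cauchy--Schwarz yields
\[
\abs{\partial_{rr}u(r\omega)}\le \abs{D^2u(r\omega)\,\omega}\le \|D^2u(r\omega)\|_{\mathrm{op}}\le \abs{D^2u(r\omega)}.
\]
The last step holds because $\|M\|_{\mathrm{op}}^2$ is the largest eigenvalue of $M^TM$, whereas $\abs{M}^2=\operatorname{tr}(M^TM)$ is the sum of all of its (nonnegative) eigenvalues.

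\emph{Second bound.} The key step is to identify the spherical term. Here $\nabla_{\Sph^{N-1}}u(r\omega)$ is the spherical gradient of $\omega\mapsto u(r\omega)$, viewed as a vector in $\R^N$ tangent to the sphere. Comparing with the orthogonal decomposition \eqref{eq:grad-decomp}, we get
\[
\tfrac1r\nabla_{\Sph^{N-1}}u(r\omega)=P_\omega\nabla u(r\omega),\qquad P_\omega:=I-\omega\otimes\omega,
\]
which is just the tangential part of the Euclidean gradient. The projection $P_\omega$ depends only on $\omega$, not on $r$. Hence, differentiating along the ray at fixed $\omega$ (componentwise in $\R^N$),
\[
\partial_r\Bigl(\tfrac1r\nabla_{\Sph^{N-1}}u(r\omega)\Bigr)=P_\omega\,\partial_r\bigl[\nabla u(r\omega)\bigr]=P_\omega D^2u(r\omega)\,\omega .
\]
Since $P_\omega$ is an orthogonal projection, it has norm at most $1$. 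Therefore the left-hand side is bounded by $\abs{D^2u(r\omega)\omega}$, which is at most $\abs{D^2u(r\omega)}$ exactly as in the first bound.

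\emph{Main obstacle.} The only delicate point is the interpretation of $\partial_r$ acting on the vector field $\tfrac1r\nabla_{\Sph^{N-1}}u$. I will take it as the componentwise derivative in the ambient $\R^N$ at fixed $\omega$, which is legitimate because the tangent space $\omega^\perp$ does not change along the ray. With this convention the factor $1/r$ cancels exactly against the scaling of the spherical gradient, and no lower-order terms appear. If one instead works in local coordinates on the sphere, the same conclusion follows, but metric factors have to be tracked; the ambient formulation avoids that bookkeeping.
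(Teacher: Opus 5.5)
Your proof is correct and takes essentially the paper's route: you identify $\partial_{rr}u$ and $\partial_r\bigl(\tfrac1r\nabla_{\Sph^{N-1}}u\bigr)$ as the radial and tangential components of $D^2u(r\omega)\,\omega$, namely $\langle D^2u\,\omega,\omega\rangle$ and $P_\omega D^2u\,\omega$, and bound each by $\abs{D^2u\,\omega}\le\abs{D^2u}$. The only difference is that the paper combines the two components by Pythagoras, which gives the slightly stronger $\abs{\partial_{rr}u}^2+\abs{\partial_r(\tfrac1r\nabla_{\Sph^{N-1}}u)}^2\le\abs{D^2u}^2$, whereas you bound each component separately.
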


\begin{proof}
Differentiating $r\mapsto\nabla u(r\omega)$ along the fixed direction $\omega$
gives, by the chain rule, $\partial_r\bigl(\nabla u(r\omega)\bigr)
=D^{2}u(r\omega)\,\omega$. On the other hand, differentiating the
decomposition~\eqref{eq:grad-decomp} in $r$ (with $\omega$ held fixed, so that
$\partial_r\omega=0$) yields
\[
  \partial_r\bigl(\nabla u(r\omega)\bigr)
    = \partial_{rr}u(r\omega)\,\omega
      + \partial_r\!\Bigl(\tfrac{1}{r}\nabla_{\Sph^{N-1}}u(r\omega)\Bigr).
\]
The first term is parallel to $\omega$, while the second is tangential, because
$\tfrac{1}{r}\nabla_{\Sph^{N-1}}u(r\omega)\perp\omega$ for every $r$ and
differentiating a vector field that is everywhere orthogonal to the fixed vector
$\omega$ produces a field still orthogonal to $\omega$. These two components are
therefore orthogonal, and Pythagoras' theorem gives
\[
  \abs{\partial_{rr}u(r\omega)}^{2}
   + \Bigl|\partial_r\!\bigl(\tfrac{1}{r}\nabla_{\Sph^{N-1}}u(r\omega)\bigr)\Bigr|^{2}
   = \abs{D^{2}u(r\omega)\,\omega}^{2}
   \le \abs{D^{2}u(r\omega)}^{2},
\]
the last inequality because $\abs{D^2u\,\omega}\le\abs{D^2u}$ for the unit vector
$\omega$. Both estimates in~\eqref{eq:hessian} follow.
\end{proof}

\begin{remark}\rm\label{rem:correct-bound}
It is the \emph{combination} $\partial_r\bigl(r^{-1}\nabla_{\Sph^{N-1}}u\bigr)$,
and not $r^{-1}\partial_r\nabla_{\Sph^{N-1}}u$ on its own, that is dominated by
$\abs{D^2u}$. Indeed
$r^{-1}\partial_r\nabla_{\Sph^{N-1}}u
=\partial_r\bigl(r^{-1}\nabla_{\Sph^{N-1}}u\bigr)
+r^{-2}\nabla_{\Sph^{N-1}}u$, and the second summand carries the lower--order
angular term, which is not controlled by the Hessian alone. This is why, in the
angular estimate below, we work directly with the tangential field
$P:=r^{-1}\nabla_{\Sph^{N-1}}u$, whose radial derivative is exactly the quantity
appearing in~\eqref{eq:hessian}. The price is that the angular Hardy inequality
is naturally weighted by $r^{a-1}$ rather than $r^{a-N-1}$, which is the origin
of the condition $a\ne 0$ in~\eqref{eq:const}.
\end{remark}

\subsection{The pointwise decomposition}

A direct computation from~\eqref{eq:grad-decomp}, using
$\nabla(u/\abs{x})=\nabla u/r-u\,x/r^{3}$ and the orthogonality of the radial and
tangential parts, gives
\begin{equation}\label{eq:decomp-square}
  \Bigl|\nabla\Bigl(\tfrac{u(x)}{\abs{x}}\Bigr)\Bigr|^{2}
  = \frac{1}{r^{4}}\abs{\nabla_{\Sph^{N-1}}u(r\omega)}^{2}
  + \frac{1}{r^{4}}\bigl(r\partial_r u(r\omega) - u(r\omega)\bigr)^{2}.
\end{equation}
From the elementary inequality $(A+B)^{N/2}\le 2^{N/2-1}(A^{N/2}+B^{N/2})$,
valid for $N\ge 2$ and $A,B\ge 0$, we obtain
\begin{equation}\label{eq:decomp-power}
  \Bigl|\nabla\Bigl(\tfrac{u}{\abs{x}}\Bigr)\Bigr|^{N}
  \le 2^{N/2-1}
     \left[\frac{1}{r^{2N}}\abs{\nabla_{\Sph^{N-1}}u}^{N}
         + \frac{1}{r^{2N}}\abs{r\partial_r u - u}^{N}\right].
\end{equation}
We estimate the two contributions separately.

\subsection{The radial term}

Fix $\omega\in\Sph^{N-1}$ and set $g(r):=r\partial_r u(r\omega)-u(r\omega)$. Then
$g\in C^\infty_c(0,\infty)$ and $g'(r)=r\,\partial_{rr}u(r\omega)$. Multiplying the
radial part of~\eqref{eq:decomp-power} by $\abs{x}^{a}=r^{a}$ and integrating in
spherical coordinates,
\[
  \int_{\R^N}\frac{\abs{r\partial_r u-u}^{N}}{r^{2N}}\abs{x}^{a}\dx
   = \int_{\Sph^{N-1}}\!\!\int_{0}^{\infty}
        r^{\,a-N-1}\abs{g(r)}^{N}\dr\domega.
\]
Applying Lemma~\ref{lem:hardy} with $\beta=a-N-1$, which is admissible precisely
when $a\ne N$ and produces the constant $\bigl(N/\abs{a-N}\bigr)^{N}$,
\[
  \int_{0}^{\infty} r^{\,a-N-1}\abs{g}^{N}\dr
   \le \left(\frac{N}{\abs{N-a}}\right)^{N}\!\!
       \int_{0}^{\infty} r^{\,a-1}\abs{g'}^{N}\dr
   = \left(\frac{N}{\abs{N-a}}\right)^{N}\!\!
       \int_{0}^{\infty} r^{\,a+N-1}\abs{\partial_{rr}u(r\omega)}^{N}\dr,
\]
where we used $g'=r\,\partial_{rr}u$. Invoking the first bound
in~\eqref{eq:hessian} and integrating over $\omega\in\Sph^{N-1}$,
\begin{equation}\label{eq:radial-bound}
  \int_{\R^N}\frac{\abs{r\partial_r u-u}^{N}}{r^{2N}}\abs{x}^{a}\dx
   \le \left(\frac{N}{\abs{N-a}}\right)^{N}\int_{\R^N}\abs{D^{2}u}^{N}\abs{x}^{a}\dx,
   \qquad a\ne N.
\end{equation}

\subsection{The angular term}

Fix $\omega\in\Sph^{N-1}$ and set $P(r):=r^{-1}\nabla_{\Sph^{N-1}}u(r\omega)$, a
smooth, compactly supported, $\Sph^{N-1}$--tangential vector field along
$(0,\infty)$. By the second bound in~\eqref{eq:hessian},
$\abs{\partial_r P(r)}\le\abs{D^{2}u(r\omega)}$. Changing to spherical coordinates,
\[
  \int_{\R^N}\frac{\abs{\nabla_{\Sph^{N-1}}u}^{N}}{r^{2N}}\abs{x}^{a}\dx
   = \int_{\Sph^{N-1}}\!\!\int_{0}^{\infty}
        r^{\,a-1}\abs{P(r)}^{N}\dr\domega,
\]
since $r^{-2N}\abs{\nabla_{\Sph^{N-1}}u}^{N}=r^{-N}\abs{P}^{N}$ and the angular
measure contributes the factor $r^{N-1+a}$.

To estimate the inner integral we apply Lemma~\ref{lem:hardy} to $\abs{P}$. As
$\abs{P}$ need not be smooth at its zeros, we regularise: for $\varepsilon>0$ put
\[
  f_\varepsilon(r):=\bigl(\abs{P(r)}^{2}+\varepsilon^{2}\bigr)^{1/2}-\varepsilon .
\]
Then $f_\varepsilon\in C^\infty_c(0,\infty)$, $f_\varepsilon\nearrow\abs{P}$
pointwise as $\varepsilon\to 0^{+}$, and the chain rule together with the
Cauchy--Schwarz inequality gives the pointwise bound
\[
  \abs{f_\varepsilon'(r)}
   = \frac{\bigl|\langle P,\partial_r P\rangle\bigr|}
          {\bigl(\abs{P}^{2}+\varepsilon^{2}\bigr)^{1/2}}
   \le \abs{\partial_r P(r)} \le \abs{D^{2}u(r\omega)}.
\]
Applying Lemma~\ref{lem:hardy} with $\beta=a-1$, admissible precisely when
$a\ne 0$ and producing the constant $\bigl(N/\abs{a}\bigr)^{N}$,
\[
  \int_{0}^{\infty} r^{\,a-1}\abs{f_\varepsilon}^{N}\dr
   \le \left(\frac{N}{\abs{a}}\right)^{N}\!
       \int_{0}^{\infty} r^{\,a-1+N}\abs{f_\varepsilon'}^{N}\dr
   \le \left(\frac{N}{\abs{a}}\right)^{N}\!
       \int_{0}^{\infty} r^{\,a+N-1}\abs{D^{2}u(r\omega)}^{N}\dr.
\]
Letting $\varepsilon\to 0^{+}$ by monotone convergence on the left and
integrating over $\omega\in\Sph^{N-1}$,
\begin{equation}\label{eq:angular-bound}
  \int_{\R^N}\frac{\abs{\nabla_{\Sph^{N-1}}u}^{N}}{r^{2N}}\abs{x}^{a}\dx
   \le \left(\frac{N}{\abs{a}}\right)^{N}\int_{\R^N}\abs{D^{2}u}^{N}\abs{x}^{a}\dx,
   \qquad a\ne 0.
\end{equation}

\subsection{Conclusion of Theorem~\ref{thm:main}}

For $a\notin\{0,N\}$, combining~\eqref{eq:decomp-power} with the radial
bound~\eqref{eq:radial-bound} and the angular bound~\eqref{eq:angular-bound}
gives
\[
  \int_{\R^N}\Bigl|\nabla\Bigl(\tfrac{u}{\abs{x}}\Bigr)\Bigr|^{N}\abs{x}^{a}\dx
  \le 2^{N/2-1}\left[\left(\frac{N}{\abs{a}}\right)^{N}
                   + \left(\frac{N}{\abs{N-a}}\right)^{N}\right]
        \int_{\R^N}\abs{D^{2}u}^{N}\abs{x}^{a}\dx,
\]
which is~\eqref{eq:main}--\eqref{eq:const}. The remaining value $a=0$ is covered
by Theorem~\ref{thm:castro11} (cf.\ Remark~\ref{rem:azero}), so~\eqref{eq:main}
holds throughout $a\ne N$. For $N=1$ there is no angular term, the convexity step
is vacuous, and the radial estimate~\eqref{eq:radial-bound}, valid at $p=N=1$ by
Lemma~\ref{lem:hardy}, gives the inequality for every $a\ne 1$ with constant
$1/\abs{1-a}$. This proves Theorem~\ref{thm:main}. \qed

\begin{remark}\rm\label{rem:radial-sharp}
For radial $u$ the angular term in~\eqref{eq:decomp-square} vanishes and the
convexity step is unnecessary, so the argument yields the radial estimate
\[
  \int_{\R^N}\Bigl|\nabla\Bigl(\tfrac{u}{\abs{x}}\Bigr)\Bigr|^{N}\abs{x}^{a}\dx
   \le \left(\frac{N}{\abs{N-a}}\right)^{N}\!\int_{\R^N}\abs{\partial_{rr}u}^{N}\abs{x}^{a}\dx,
   \qquad u\ \text{radial},\ a\ne N.
\]
In the Emden--Fowler variable $r=e^{t}$, $v(t)=u(e^{t})$, this is precisely
Lemma~\ref{lem:hardy} with $\mu=a-N$ applied to $w:=v'-v$, since
$r\partial_r u-u=v'-v$ and $r^{2}\partial_{rr}u=v''-v'$. The admissible profiles
are here not all of $C^\infty_c(\R)$ but only the image of $v\mapsto v'-v$,
namely $\{w\in C^\infty_c(\R):\int_{\R}w\,e^{-t}\dt=0\}$; nevertheless the
constant $\bigl(N/\abs{N-a}\bigr)^{N}$ remains sharp on the radial subspace.
Indeed, truncating an antiderivative of the Hardy extremiser $e^{-\mu t/N}$
produces admissible functions $w_n=v_n'-v_n$ along which the ratio of the two
sides of~\eqref{eq:hardy} tends to $\bigl(N/\abs{N-a}\bigr)^{N}$; at $a=0$, where
$\mu=-N$ resonates with the homogeneous solution of $v'-v=0$, one truncates the
resonant antiderivative $t\,e^{t}$ instead, with the same limit.
\end{remark}

\subsection{Proof of Corollary~\ref{cor:main}}

It suffices to combine Theorem~\ref{thm:main} with the weighted
Calder\'on--Zygmund inequality. The power weight $w(x)=\abs{x}^{a}$ belongs to the
Muckenhoupt class $A_{N}(\R^N)$ if and only if $-N<a<N(N-1)$
(see~\cite[Ch.~7]{Grafakos}). On that range, the Coifman--Fefferman
theorem~\cite{CoifmanFefferman} ensures that Calder\'on--Zygmund operators, in
particular the second--order Riesz transforms $R_iR_j$, through which
$\partial_{ij}u=-R_iR_j\Delta u$, are bounded on $L^{N}(w\dx)$. Hence
\begin{equation}\label{eq:wCZ}
  \int_{\R^N}\abs{D^{2}u}^{N}\abs{x}^{a}\dx
   \le C(N,a)\int_{\R^N}\abs{\Delta u}^{N}\abs{x}^{a}\dx,
   \qquad -N<a<N(N-1).
\end{equation}
For $-N<a<N(N-1)$ with $a\ne N$, combining~\eqref{eq:main} and~\eqref{eq:wCZ}
gives~\eqref{eq:maincor}. When $N=2$ one has $N(N-1)=2$, so the range is
$-2<a<2$ and the constraint $a\ne N$ is automatic. \qed

\begin{remark}\rm
The exclusion $a=N$ in Corollary~\ref{cor:main} is genuine for $N\ge 3$, where
$N<N(N-1)$ and $a=N$ lies in the interior of the Muckenhoupt range: the
weighted Calder\'on--Zygmund step is available there, but the left--hand
inequality~\eqref{eq:main} itself fails, by Theorem~\ref{thm:sharp}.
\end{remark}

\section{Sharpness of the range: the critical exponent
\texorpdfstring{$a=N$}{a=N}}\label{sec:sharpness}

We prove that~\eqref{eq:main} fails at $a=N$, working on radial functions. By
Theorem~\ref{thm:main} it holds for every other value of $a$, so $a=N$ is the
unique critical exponent.

Let $u$ be radial, $u(x)=u(r)$ with $r=\abs{x}$. In spherical coordinates, with
$\omega_{N-1}=\abs{\Sph^{N-1}}$ and using $\abs{D^2u}^2=\abs{u''}^2+(N-1)\abs{u'/r}^2$
for radial $u$,
\[
  \int_{\R^N}\Bigl|\nabla\Bigl(\tfrac{u}{\abs{x}}\Bigr)\Bigr|^{N}\abs{x}^{N}\dx
   = \omega_{N-1}\int_0^\infty \frac{\abs{ru'-u}^{N}}{r^{2N}}\,r^{2N-1}\dr
   = \omega_{N-1}\int_0^\infty \frac{\abs{ru'-u}^{N}}{r}\dr,
\]
\[
  \int_{\R^N}\abs{D^2u}^{N}\abs{x}^{N}\dx
   = \omega_{N-1}\int_0^\infty
        \bigl(\abs{u''}^2+(N-1)\abs{u'/r}^2\bigr)^{N/2} r^{2N-1}\dr.
\]
Substituting $r=e^{t}$ and $v(t)=u(e^{t})$, one has $ru'-u=v'-v$,
$r^{2}u''=v''-v'$ and $u'/r=e^{-2t}v'$, so that
\[
  \int_0^\infty \frac{\abs{ru'-u}^{N}}{r}\dr = \int_{\R}\abs{v'-v}^{N}\dt,
\]
\[
  \int_0^\infty \bigl(\abs{u''}^2+(N-1)\abs{u'/r}^2\bigr)^{N/2} r^{2N-1}\dr
   = \int_{\R}\bigl((v''-v')^{2}+(N-1)(v')^{2}\bigr)^{N/2}\dt .
\]
Thus, for radial functions, the inequality~\eqref{eq:main} at $a=N$ is equivalent
to
\begin{equation}\label{eq:tag}
  \int_{\R}\abs{v'-v}^{N}\dt
   \le C\int_{\R}\bigl((v''-v')^{2}+(N-1)(v')^{2}\bigr)^{N/2}\dt,
   \qquad v\in C^\infty_c(\R).
\end{equation}

We construct a sequence violating~\eqref{eq:tag}. For $M>2$ choose
$\varphi_M\in C^\infty_c(\R)$ with $\varphi_M\equiv 1$ on $[-M+1,M-1]$, supported
in $[-M,M]$, with the transitions confined to
$[-M,-M+1]\cup[M-1,M]$ and with $\abs{\varphi_M'}$, $\abs{\varphi_M''}$ bounded
uniformly in $M$. Set $v=\varphi_M$. On the plateau $[-M+1,M-1]$ one has
$\varphi_M'=\varphi_M''=0$ and $\varphi_M=1$, so the left integrand
$\abs{\varphi_M'-\varphi_M}^{N}$ equals $1$ there; hence
\[
  \int_{\R}\abs{\varphi_M'-\varphi_M}^{N}\dt \ge 2M-2 .
\]
On the right, the integrand is supported in the two transition intervals, of total
length $2$, and is bounded there by a constant $K=K(N)$ depending only on $N$ and
on the uniform bounds for $\varphi_M',\varphi_M''$; therefore
\[
  \int_{\R}\bigl((\varphi_M''-\varphi_M')^{2}+(N-1)(\varphi_M')^{2}\bigr)^{N/2}\dt
   \le 2K =: C_0 ,
\]
with $C_0$ independent of $M$. Consequently the ratio of the two sides is at least
$(2M-2)/C_0\to\infty$ as $M\to\infty$, so no finite $C$ can
satisfy~\eqref{eq:tag}. The functions $u_M(x)=\varphi_M(\log\abs{x})$ are genuine
radial elements of $C^\infty_c(\R^N\setminus\{0\})$, supported in the annulus
$e^{-M}\le\abs{x}\le e^{M}$, so~\eqref{eq:main} fails at $a=N$. This proves
Theorem~\ref{thm:sharp}. \qed

\begin{remark}\rm
The mechanism is transparent in the Emden--Fowler picture: at $a=N$ the
exponential weight $e^{(a-N)t}$ becomes constant, the weighted Hardy inequality of
Section~\ref{sec:hardy} degenerates, and a function that is constant on a long
interval makes the left side grow linearly in the length while leaving the right
side bounded. This is the exact analogue of the failure of the classical Hardy
inequality at $p=N$.
\end{remark}

\section{Concluding remarks}\label{sec:remarks}

\subsection{On Open Problem~1 of \cite{Castro2026}}
Theorem~\ref{thm:main} provides an \emph{explicit} but not necessarily
\emph{optimal} constant in~\eqref{eq:castro12}, namely
$C_{N,a}=2^{N/2-1}\bigl[(N/\abs{a})^{N}+(N/\abs{N-a})^{N}\bigr]$, for $N\ge 2$ and
any $a\ne 0,N$. The corresponding statement in Corollary~\ref{cor:main} carries an
additional factor from the weighted Calder\'on--Zygmund inequality~\eqref{eq:wCZ},
whose sharp constant on $L^{N}(\abs{x}^{a}\dx)$ is not known in general. The full
Open Problem~1, the determination of the best constants in~\eqref{eq:castro11}
and~\eqref{eq:castrocor}, remains open. For \emph{radial} functions,
Remark~\ref{rem:radial-sharp} settles the corresponding question with the sharp
constant
\[
  \sup_{u\ \text{radial}}\frac{\int_{\R^N}\abs{\nabla(u/\abs{x})}^{N}\abs{x}^{a}\dx}
       {\int_{\R^N}\abs{\partial_{rr}u}^{N}\abs{x}^{a}\dx}
   = \left(\frac{N}{\abs{N-a}}\right)^{N},
   \qquad a\ne N .
\]

\subsection{Sharp range and a critical--exponent analogy}
The role of $a=N$ as the unique critical endpoint in~\eqref{eq:main} parallels the
role of $p=N$ as the critical exponent in the classical Hardy inequality. Both
endpoints are characterised, through the Emden--Fowler reduction, by the vanishing
of the exponential weight in the equivalent one--dimensional problem. Away from
this single value the inequality holds in both directions $a<N$ and $a>N$.

\subsection{The fractional problems}
The fractional questions Open Problems~2 and~3 of~\cite{Castro2026} remain open.
The argument of Theorem~\ref{thm:main} relies on the structure of $\partial_{rr}$
as a genuine second derivative, through the radial identity $g'=r\,\partial_{rr}u$
and the Hessian decomposition of Lemma~\ref{lem:hessian}. A fractional analogue
would require a nonlocal substitute for these identities, perhaps via the
Caffarelli--Silvestre extension or the singular--integral representation of
$(-\Delta)^{s}$. We hope to return to this elsewhere.

\subsection{The lower endpoint and the angular degeneracy}
The $\abs{D^{2}u}$ form~\eqref{eq:main} holds for every $a\ne N$, with no lower
bound on $a$; only the passage to the $\abs{\Delta u}$ form~\eqref{eq:maincor}
invokes the Muckenhoupt condition and hence requires $-N<a<N(N-1)$. Whether
\eqref{eq:maincor} persists for $a\le -N$ or $a\ge N(N-1)$ appears to be open. We
also recall, from Remark~\ref{rem:azero}, that the explicit
constant~\eqref{eq:const} degenerates at $a=0$ although the inequality itself does
not; producing an explicit constant that is uniform across a neighborhood of
$a=0$, equivalently, an angular estimate that does not pass through the
borderline weight, would be of independent interest.

%\section*{Acknowledgements}

\end{document}